 \theoremstyle{plain}
\newtheorem{thm}{Theorem}
  \theoremstyle{definition}
  \newtheorem{defn}[thm]{Definition}
  \theoremstyle{remark}
  \newtheorem{rem}[thm]{Remark}
  \theoremstyle{plain}
  \newtheorem{lem}[thm]{Lemma}
  \theoremstyle{plain}
  \newtheorem{prop}[thm]{Proposition}
  \theoremstyle{plain}
  \newtheorem{cor}[thm]{Corollary}
 \newtheorem{question}[thm]{Question}
\begin{document}

\title{Naming an indiscernible sequence in $NIP$ theories}

\author{Artem Chernikov and Pierre Simon}

\begin{abstract}
In this short note we show that if we add predicate for a dense complete indiscernible sequence in a dependent theory then the result is still dependent. This answers a question of Baldwin and Benedikt and implies that every unstable dependent theory has a dependent expansion interpreting linear order.
\end{abstract}
\maketitle

\section*{Introduction}

Let $T$ be an $NIP$ theory in a language $L$. Consider a model $M$
and a small indiscernible sequence $I$ indexed by a dense complete
linear order (small means that $M$ is $|I|^{+}$-saturated). We consider
the language $L_{P}$ with a unary predicate $P$ added for the sequence
$I$, and let $T_{P}=Th(M,I)$.

\begin{defn}
We say that an $L_{P}$-formula is \textit{bounded} if it is of the
form $$(Q_{1}x_{1}\in P)(Q_{2}x_{2}\in P)...(Q_{n}x_{n}\in P)\phi(x_{1},...,x_{n},\bar{y}),$$
where $\phi$ is an $L$-formula and each $Q_{i}$ is either $\exists$
or $\forall$.
\end{defn}
In   \cite{BB} Baldwin and Benedikt prove the following.

\begin{thm}
\label{thm: BB}Assume $T$ is $NIP$.

1) For each dense complete indiscernible sequence $I$ and formula
$\phi(\bar{a},y)$, there is some $\bar{c}\in I$ such that for every $\bar{b}\in I$,
the truth value of $\phi(\bar{a},\bar{b})$ is totally determined
by the quantifier-free order type of $\bar{b}$ over $\bar{c}$ .

2) Every formula in $T_{P}$ is equivalent to a bounded one.

From this follows :

3) $Th(M,I)\equiv Th(M,J)$ if and only if $EM(I)=EM(J)$.

4) $P$ is stably embedded and the $L_{P}$-induced structure (traces on $P$ of all $L_P$-definable sets with parameters from $M$) is that of a pure linear order.
\end{thm}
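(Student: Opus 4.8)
The plan is to isolate part 1) as the one genuinely model-theoretic statement and to obtain 2)--4) from it by soft arguments. For 1) I would use the $NIP$ alternation bound: for each $L$-formula $\phi(\bar x,y)$ there is $N<\omega$ so that, along any indiscernible sequence and for any value of $\bar x$, the truth value of $\phi(\bar a,y)$ changes at most $N$ times. Applied to $I$, the set $\{\,b\in I:\ \models\phi(\bar a,b)\,\}$ is a union of at most $N$ convex subsets of the order $I$; since $I$ is dense and complete, the finitely many endpoints of these intervals are realized in $I$, so gathering them into a tuple $\bar c\in I$ makes the truth value of $\phi(\bar a,b)$ depend only on the quantifier-free order type of $b$ over $\bar c$. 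For a tuple $\bar b=(b_1,\dots,b_m)$ I would pass to a single-variable instance for each of the finitely many relative arrangements of the $b_i$, obtain through a Ramsey-type extraction a bound on the cut-points uniform in the frozen coordinates, and merge all the cut-points into one finite $\bar c\in I$.

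For 2) I would prove by induction on complexity that every $L_P$-formula is $T_P$-equivalent to a bounded one. Atomic formulas, Boolean combinations, and restricted quantifiers are immediate after prenexing the quantifiers over $P$ and absorbing them into the $L$-matrix, so the only real step is an ambient existential $\exists y$ (ranging over $M$) in front of a bounded formula, which a priori fails to commute with an inner restricted $(\forall x\in P)$. Part 1) removes this obstruction: applied to the whole tuple of $P$-variables against the parameters, it shows that the truth pattern of the $L$-matrix over $I^{n}$ is governed by boundedly many cut-points, so the bounded formula is equivalent to one of the form $(\exists\bar c\in P)\,\theta(\bar c,y,\bar w)$ with $\theta$ an $L$-formula. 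Its $P$-quantifier is now existential, hence commutes with $\exists y$, and $\exists y$ then collapses into $\theta$, leaving a bounded formula.

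Parts 3) and 4) are then formal consequences. For 3), a bounded sentence has all its variables quantified over $P=I$ with an $L$-matrix, so its truth depends only on the $L$-types of finite tuples from $I$, that is, on $EM(I)$; by 2) every sentence is bounded, so $Th(M,I)$ is a function of $EM(I)$, while conversely each formula of $EM(I)$ is recorded by a bounded sentence, giving the equivalence. For 4), a definable $D\subseteq P^{n}$ with parameters $\bar m\in M$ is cut out by a bounded $\beta(\bar y,\bar m)$; restricting $\bar y$ to $P$ and applying part 1) to the combined tuple from $I$ against the parameter $\bar m$ shows that membership in $D$ depends only on the order type of $\bar y$ over a finite $\bar c\in I$. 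This exhibits $D$ as order-definable with parameters $\bar c\in P$, which simultaneously yields stable embeddedness and identifies the induced structure with that of the pure linear order.

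The main obstacle I expect is the tuple case of part 1): the single-variable convexity argument is immediate, but for $\phi(\bar a,b_1,\dots,b_m)$ one must ensure that the cut-points produced for the various relative positions of the $b_i$ stay uniformly bounded and can be consolidated into a single finite $\bar c$, rather than proliferating with $m$. The $NIP$ alternation bound, fed through a Ramsey argument, is exactly what controls this, and once part 1) is secured the remaining steps are bookkeeping.
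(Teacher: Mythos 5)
Note first that the paper does not prove this theorem at all: it is quoted from Baldwin and Benedikt \cite{BB} (part 1 is their Theorem 5.2, part 2 their Theorem 3.3, part 3 their Theorem 8.1, part 4 their Corollary 3.6), with \cite{Adl} cited for a simplified proof of 1). So your attempt must be judged against those proofs, and it has genuine gaps at exactly the two load-bearing points. For 1), your single-variable argument is correct: finite alternation along an indiscernible sequence plus density and completeness of the index order realizes the finitely many cut points inside $I$. But the tuple case, which you yourself flag as ``the main obstacle,'' is the entire content of the theorem, and ``the NIP alternation bound, fed through a Ramsey argument'' does not discharge it. Freezing $b_{2},\dots,b_{m}$ and varying $b_{1}$ produces cut points that depend on the frozen coordinates, and a priori infinitely many distinct cut positions arise as the frozen tuple ranges over $I$. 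A Ramsey-type extraction only yields a subsequence on which the behaviour is uniform, whereas the statement requires a single $\bar{c}\in I$ working simultaneously for \emph{all} tuples $\bar{b}$ from the original sequence. Turning ``moving cuts'' into a violation of NIP is a genuine argument (this is what Theorem 5.2 of \cite{BB}, or Section 3 of \cite{Adl}, actually does), not bookkeeping.

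Your reduction for 2) also breaks at its one real step. To commute the ambient $\exists y$ past the bounded prefix you claim that every bounded formula is equivalent to one of the form $(\exists\bar{c}\in P)\,\theta(\bar{c},y,\bar{w})$ with $\theta$ an $L$-formula. But rewriting $(Q_{1}x_{1}\in P)\cdots(Q_{n}x_{n}\in P)\phi$ via cut points requires either asserting that $\bar{c}$ is a determining tuple --- a universal statement over $P$ --- or retaining universal $P$-quantifiers to express the $\forall$'s of the prefix. If you drop this, the right-to-left implication fails: a non-determining $\bar{c}$ together with existential witnesses can satisfy the pattern while the original bounded formula is false. So the formula you produce is not purely existential over $P$, and $\exists y$ still does not commute. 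Symptomatically, your argument for 2) never uses the hypothesis that $M$ is $|I|^{+}$-saturated (smallness of $I$), which is precisely what the back-and-forth proof of Theorem 3.3 in \cite{BB} relies on; a derivation indifferent to that hypothesis cannot be complete. Relatedly, your deduction of 3) from 2) needs the bounded equivalent of a sentence to be the same modulo $Th(M,I)$ and modulo $Th(M,J)$, a uniformity that 2) as stated does not give. Your derivation of 4) from 1) and 2) is essentially sound.
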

\begin{rem}
Point 1) is the Theorem 5.2 there, but for a simplified proof see \cite{Adl}, Section 3.
2) is  Theorem 3.3, 3) is Theorem 8.1, 4) is Corollary 3.6.
\end{rem}

They prove that if $T$ is stable then $T_{P}$ is stable as well,
and ask whether $T_{P}$ is always dependent when $T$ was. In the next section
we answer this question positively.
Throughout the paper we assume Martin's Axiom (MA).

\section*{Dependence of $T_{P}$}

First a trivial combinatorial observation.

\begin{lem}
\label{lem: monotonic functions}Let $h_{k}:\omega\to I$, $k \leq m$ be monotone functions, $h_{0}(n) < ... < h_{m}(n)$ for all $n$ and let $a_{1},...,a_{n}\in I$. Then for some $p\leq 2nm+1$
both $(h_{k}(p))_{k \leq m}$ and $(h_{k}(p+1))_{k \leq m}$ have the same order
type over $a_{1},...,a_{n}$.
\end{lem}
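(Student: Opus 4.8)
The plan is to combine monotonicity with a pigeonhole argument: as $p$ increases, the order type of the tuple $(h_k(p))_{k\le m}$ over $a_1,\dots,a_n$ can change only boundedly often, and counting how often produces a pair of consecutive indices on which it is constant.

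First I would isolate what the order type of $(h_k(p))_{k\le m}$ over $a_1,\dots,a_n$ really depends on. Since $h_0(p)<\dots<h_m(p)$ holds for every $p$, the internal order of the tuple is the same at every stage, so its order type over $a_1,\dots,a_n$ is completely determined by the position of each individual element $h_k(p)$ relative to the finite set $\{a_1,\dots,a_n\}$. Those $n$ points cut the line $I$ into at most $n+1$ intervals, so the position of a single element takes at most $n+1$ possible values; write $\pi(x)$ for this position and $R(p)=(\pi(h_k(p)))_{k\le m}$ for the resulting position vector, noting that two stages $p,p'$ give the same order type over $a_1,\dots,a_n$ exactly when $R(p)=R(p')$.

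Next I would exploit monotonicity. For each fixed $k$ the map $p\mapsto\pi(h_k(p))$ is the composition of the monotone map $h_k$ with the monotone position map $\pi$, hence is itself monotone in $p$; a monotone sequence passing through at most $n+1$ values changes value at most $n$ times. Call $p$ a \emph{jump} if $R(p)\neq R(p+1)$. A jump can occur only when some coordinate changes its position, so summing over the $m+1$ coordinates the total number of jumps over all of $\omega$ is at most $n(m+1)$, which for $m\ge 1$ is at most $2nm$.

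Finally, pigeonhole finishes the argument: among the indices $p=1,\dots,2nm+1$ there are $2nm+1$ consecutive steps, but at most $2nm$ of them can be jumps, so at least one index $p\le 2nm+1$ is not a jump, i.e.\ $(h_k(p))_{k\le m}$ and $(h_k(p+1))_{k\le m}$ realize the same order type over $a_1,\dots,a_n$. The underlying mechanism (monotone $\Rightarrow$ finitely many position changes $\Rightarrow$ a constant window) is immediate; the only point I expect to require genuine care is the bookkeeping behind the precise constant. Reading $\pi$ as interval position as above gives at most $n$ changes per coordinate and the clean total $n(m+1)\le 2nm$; if instead one also records exact coincidences $h_k(p)=a_j$ as a separate position, each coordinate could change up to $2n$ times, and one must then check whether the strict nesting $h_0(p)<\dots<h_m(p)$ still confines the total jump count to $2nm$. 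Verifying that this accounting indeed yields the stated bound $2nm+1$, rather than a slightly larger constant, is the main (and essentially the only) thing to be checked.
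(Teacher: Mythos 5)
Your overall mechanism is exactly the paper's: monotonicity forces the relative position data to change only boundedly often, and pigeonhole then yields a constant window. (The paper runs it as a proof by contradiction, counting for each pair $(h_k,a_j)$ the steps $p$ at which their relative position changes; you count globally, but it is the same argument.) The genuine gap is precisely the point you flagged at the end and left unverified: the order type of $(h_k(p))_{k\le m}$ over $a_1,\dots,a_n$ records equalities $h_k(p)=a_j$, not just interval membership, so a single coordinate ranges over $2n+1$ positions (the $n+1$ open intervals together with the $n$ points themselves), and a monotone coordinate can change position up to $2n$ times, not $n$. Your main count $n(m+1)\le 2nm$ therefore does not apply, and the fallback count gives $2n(m+1)=2nm+2n$ possible jumps, which exceeds the $2nm+2$ values $p\in\{0,\dots,2nm+1\}$. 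Moreover, your hope that the strict nesting $h_0(p)<\dots<h_m(p)$ confines the total to $2nm$ is unfounded: the lemma with the literal constant $2nm+1$ is in fact false. Take $I=\mathbb{Q}$, $n=1$, $a_1=0$, and $h_j(p)=p-2(m-j)-1$ for $j\le m$; these are strictly increasing and strictly nested, $h_j$ passes exactly through $0$ at $p=2(m-j)+1$, and the order type over $a_1$ changes at every $p\in\{0,1,\dots,2m+1\}$. The first good $p$ is $2m+2=2n(m+1)$.

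You should know, however, that the paper's own proof commits the same arithmetic slip: it correctly allows the equality transitions (``$h_i(p)=a_j$, $h_i(p+1)\neq a_j$'') and correctly bounds the transitions per pair by two, but there are $n(m+1)$ pairs (the functions are $h_0,\dots,h_m$), so the counting yields at most $2n(m+1)$ jumps, which is not a contradiction on a range of $2nm+2$ values. So your proposal and the paper's proof stand or fall together: both establish the lemma with $2n(m+1)$ in place of $2nm+1$, and both fail at the stated constant. This is harmless downstream, since Proposition \ref{pro: cuts on the plane} only needs \emph{some} finite bound depending on $m$ and $n$, with the spacing condition $l_{2(i+1)}-l_{2i}>2mn+1$ enlarged accordingly. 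To repair your write-up, use the $2n+1$ positions per coordinate (so at most $2n$ changes each), conclude that at most $2n(m+1)$ of the $p$'s are jumps, and state the lemma with the bound $p\le 2n(m+1)$; that is then a correct proof, essentially identical to the paper's intended one.
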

\begin{proof}
Suppose not. Then for each $p\leq 2nm+1$ there is some
$i\leq n,j\leq m$ with $h_{i}(p)<a_{j},h_{i}(p+1)\geq a_{j}$ or $h_{i}(p)>a_{j},h_{i}(p+1)\leq a_{j}$ or $h_i(p) = a_j, h_i(p+1) \neq a_j$,
 and by monotonicity for every pair of $i,j$ there can be only up to two
such $p$ - a contradiction.
\end{proof}

Next a crucial technical lemma.

\begin{prop}
\label{pro: cuts on the plane} 1) Assume $P$ is ordered by some
$L_{p}$-definable ``$<$''. Let $I=(b_{i})_{i<\omega}$ be an
$L_{P}$-indiscernible sequence and $E\subset\omega$ the set of even
numbers. Assume that $f:E\to P(\mathbb{M)}$, $n<\omega$ even and
$\phi(x_{1},...,x_{n};y_{1},...,y_{n})\in L_{p}$ such that for any
sequence $k_{1}<k_{2}<...<k_{n}\in E$ we have

$\models\phi(b_{k_{1}},...,b_{k_{n}};f(k_{1}),...,f(k_{n}))$.

Then there is $g:\omega\to P(\mathbb{M})$ extending $f$ and $l_{1},...,l_{n}\in\omega$
with $l_{i}\equiv i~(mod2)$ and $\models\phi(b_{l_{1}},...,b_{l_{n}};g(l_{1}),...,g(l_{n}))$.

2) Same claim but assuming that the $L_{P}$-induced structure on
$P$ is just the equality. 
\end{prop}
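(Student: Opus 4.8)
\emph{Plan.} The plan is to start from the hypothesis, which gives $\phi$ on every increasing tuple drawn from the even indices together with the prescribed values of $f$, and to \emph{transport} this to a tuple of alternating parity using the $L_P$-indiscernibility of $(b_i)$. Note first that once suitable values at the odd-indexed slots are found, defining $g$ is essentially free: we must keep $g=f$ on $E$, and on the remaining odd indices we may put the chosen witnesses at the slots $l_i$ with $i$ odd and arbitrary elements of $P$ elsewhere. So the real content is to produce an increasing tuple $l_1<\dots<l_n$ with $l_i\equiv i\pmod 2$ and elements $c_i\in P$ (for $i$ odd) such that $\models\phi(b_{l_1},\dots,b_{l_n};c_1,f(l_2),c_3,f(l_4),\dots)$, the even-position entries being forced to the values $f(l_i)$.

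The transport itself would go as follows. Fix even indices $e_1<\dots<e_n$; by hypothesis $\models\phi(b_{e_1},\dots,b_{e_n};f(e_1),\dots,f(e_n))$. I want to slide the odd-position entries $e_1,e_3,\dots$ off to genuine odd indices $o_1,o_3,\dots$ lying in the appropriate gaps, while leaving the even-position entries $e_2,e_4,\dots$ untouched. An order-preserving injection of $\omega$ fixing $e_2,e_4,\dots$ and sending $e_{2j-1}\mapsto o_{2j-1}$ induces an automorphism $\sigma$ of $\mathbb M$ fixing the $b_{e_{2j}}$ and carrying the configuration to the desired mixed tuple, with each $\sigma f(e_i)\in P$ furnishing a $P$-value. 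Setting $c_i:=\sigma f(e_i)$ for $i$ odd then works \emph{provided} $\sigma f(e_i)=f(e_i)$ for every even $i$, i.e. provided $\sigma$ can be taken to fix the committed parameters $f(e_2),f(e_4),\dots\in P$.

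The main obstacle is exactly this fixing requirement: a generic indiscernibility automorphism will move the $P$-parameters, and for an arbitrary choice of the $o_i$ one cannot avoid this. Here Theorem~\ref{thm: BB} and Lemma~\ref{lem: monotonic functions} enter. By Theorem~\ref{thm: BB}(4) the predicate $P$ is stably embedded with induced structure a pure linear order, so whether $\sigma$ can be chosen to fix $f(e_2),f(e_4),\dots$ while moving $b_{e_{2j-1}}\mapsto b_{o_{2j-1}}$ depends only on the order type of the relevant $P$-data over the finite set $a_1,\dots,a_n$ of committed parameters. I would therefore not fix the $e_i$ in advance but slide them: the values of $f$ along the candidate indices form finitely many monotone tracks (after refining to a monotone subsequence), and Lemma~\ref{lem: monotonic functions} yields two consecutive positions $p,p+1$ at which these tracks have the same order type over $a_1,\dots,a_n$. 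At such a stable position the even index and its neighbouring odd index become interchangeable over the committed $P$-parameters, so the required $\sigma$ exists and the transport succeeds.

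Finally, the transported conditions assert the existence in $P$ of the witnesses $c_i$ for the odd slots, which amounts to realizing a partial type over a countable set corresponding to a cut in the order $P$; since $P(\mathbb M)$ is only dense and need not be complete, I would invoke Martin's Axiom (assumed throughout) to guarantee that this cut is realized by genuine elements of $P$, after which $g$ is defined as above. For part (2) the induced structure on $P$ is equality, so ``order type over $a_1,\dots,a_n$'' collapses to the pattern of equalities with the $a_j$; Lemma~\ref{lem: monotonic functions} is then replaced by the trivial remark that only finitely many indices can have their $P$-datum equal to one of the finitely many $a_j$, and the same transport goes through. I expect the delicate point throughout to be the bookkeeping that reduces ``$\sigma$ fixes the committed $P$-parameters'' to the order (resp. equality) statement supplied by the monotone functions lemma.
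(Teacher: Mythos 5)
Your high-level plan is the same as the paper's: transport the hypothesis from an all-even-index tuple to a mixed-parity tuple, recognize that the obstruction is keeping the committed parameters $f(l_2),\dots,f(l_n)$ fixed, and resolve it by sliding positions and invoking Lemma~\ref{lem: monotonic functions}. But the object that makes this work is missing from your write-up: the uniform \emph{definable} cut-point functions. From Theorem~\ref{thm: BB} one gets, by compactness, a single $m$ and $L_P$-definable functions $h_1,\dots,h_m\colon \mathbb{M}^n\to P$ such that for all $\bar a$ and $\bar d\in P$ the truth value of $\phi(\bar a;\bar d)$ depends only on the order type of $\bar d$ over $h_1(\bar a),\dots,h_m(\bar a)$. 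The monotone tracks fed into Lemma~\ref{lem: monotonic functions} must be $p\mapsto h_k(\bar b(p))$, where $\bar b(p)$ is the sliding tuple (even positions frozen at $b_{l_2},\dots,b_{l_n}$, odd positions shifted by $p$); their monotonicity is automatic from $L_P$-indiscernibility precisely because the $h_k$ are definable, and the fixed points of the lemma are the committed values $f(l_2),\dots,f(l_n)$. You instead take as tracks ``the values of $f$ along the candidate indices'' measured over the ``committed parameters'' --- but the committed parameters are themselves values of $f$, and order relations among $f$-values carry no information about $\phi$ except through the cut points, which never appear in your argument. Moreover, your ``refining to a monotone subsequence'' is not available: the comparison has to be between \emph{consecutive} shifts $p_0$ (odd, yielding the mixed tuple) and $p_0+1$ (even, where the hypothesis on $f$ applies), and passing to a subsequence destroys exactly this parity adjacency.

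A second, related problem is the insistence on a full automorphism $\sigma$ of $\mathbb{M}$, justified by stable embeddedness. The cut-point control is a \emph{per-formula} statement: each $L_P$-formula has its own finite set of cut functions, so one application of Lemma~\ref{lem: monotonic functions}, which handles finitely many tracks, can only match the cut data of the single formula $\phi$; it cannot deliver elementarity of the map, which would require matching countably many tracks simultaneously (and these can cross the fixed points at staggered shifts, so that no adjacent pair of shifts works for all of them at once). The fix is that no automorphism is needed: the proposition concerns one formula, so a $\phi$-transport suffices, and this is how the paper argues (its statements ({*}) and ({*}{*})). Your closing appeal to Martin's Axiom is a symptom of the same overreach: in the formula-by-formula version the witnesses for the odd slots are finitely many elements of $P$ realizing a prescribed \emph{finite} order type over finitely many parameters, which density of $P$ already supplies; whereas if your automorphism $\sigma$ existed, it would itself hand you witnesses inside $P$, so there would be nothing left to realize. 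As it stands, the two halves of your argument do not fit together, and the gap is concentrated exactly where the definable functions $h_k$ should have been introduced.
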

\begin{proof}
1) Since by Theorem \ref{thm: BB} the $L_{P}$-induced structure on $P$ is just that of linear order by compactness there is some $m<\omega$ such that given any ${(a}_{1},...,a_{n})\in\mathbb{M}$ there are some ${(c}_{1}<...<c_{m})\in P(\mathbb{M})$ such that for any $(d_{1},...,d_{n})\in P(\mathbb{M)}$ the truth value of $\phi(a_{1},...,a_{n};d_{1},...,d_{n})$ is totally determined by the order type of $\bar{d}$ over $\bar{c}$.

Now for each $k \leq m$ let $h_{k}:\mathbb{M}^n\to P$ be the $L_{P}$-definable
function sending $(a_{1},...,a_{n})$ to the corresponding $c_{k}$ (W.l.o.g. we assume there is a constant $\rho$ in $P$. If for some $\bar{a}$ there are $k'<k$ alternations we let $h_j(\bar a)=\rho$ for $j > k'$).

We have : 
\vspace{3pt}

({*}) for any $(a_{1},...,a_{n})\in\mathbb{M},(d_{1},...,d_{n})$,
$(d_{1}',...,d_{n}')\in P$,

$\bar{d}$ and $\bar{d}'$ have the same order type over $(h_{k}(\bar{a}))_{\leq m}$
$\implies$ $\phi(\bar{a};\bar{d})\equiv\phi(\bar{a};\bar{d}')$
\vspace{3pt}

From this by $L_{P}$-indiscernibility of $I$ :

\vspace{3pt}
({*}{*}) for any $b_{1},...{,b}_{n}$ and $b_{1}',...,b_{n}'$ increasing
sequences from $I$ and $\bar{d},\bar{d}'\in P$,

$\bar{d}$ has the same order type over $(h_{k}(\bar{b}))_{\leq m}$ as
$\bar{d}'$ over $(h_{k}(\bar{b}'))_{\leq m}$ $\implies$ $\phi(\bar{b};\bar{d})\equiv\phi(\bar{b}';\bar{d}'$).

\vspace{3pt}
Choose some $(0<l_{2}<...<l_{n-2}<l_{n}) \in E^{n/2}$ with $l_{2(i+1)}-l_{2i}> 2mn+1$.
Define $h'_{k}:\omega\to P(\mathbb{M})$ by $h'_{k}(p)=h_{k}(b_{p},b_{l_{2}},b_{l_{2}+p},...{,b}_{l_{n-2}+p},b_{l_{n}})$.

By $L_{P}$-indiscernibility of $I$ the ${h'}_{k}$'s are monotonic (at least in the interval $[1, 2mn+1]$ which is all that matters).
Thus by Lemma \ref{lem: monotonic functions} we find some (w.l.o.g. odd)
$p_{0} \leq 2mn+1$ such that ${(h'}_{k}(p_{0}))_{\leq m}$ has the same
order type as $(h'_{k}(p_{0}+1))_{\leq m}$ over $f(l_{2}),...,f(l_{n})$.
And again by $L_{P}$-indiscernibility and density of $P$ we can
find some $g(p_{0}),g(l_{2}+p_{0}),...,g(l_{n-2}+p_{0})\in P(\mathbb{M})$
such that $g(p_{0}),f(l_{2}),...,g(l_{n-2}+p_{0}),f(l_{n})$ has the
same order-type over ${(h'}_{k}(p_{0}))_{\leq m}$ as $f(p_{0}+1),f(l_{2}),...,f(l_{n-2}+p_{0}+1),f(l_{n})$
over $(h'_{k}(p_{0}+1))_{\leq m}$, and so  by ({*}{*})

$\models\phi(b_{p_{0}},b_{l_{2}},...,b_{l_{n-2}+p_{0}},b_{l_{n}};g(p_{0}),f(l_{2}),...,g(l_{n-2}+p_{0}),f(l_{n}))$
and we are done.

2) Analogously.
\end{proof}

This gives us a Ramsey-like result on completing indiscernible sequences of triangles

\begin{cor}
\label{triangles}
Let $(a_i)_{i \in \omega}\in \mathbb{M}$, $(b_{2i})_{i\in \omega}\in P$ be given and $d \in \mathbb{M}$. Then there is some sequence $(a'_ib'_i)_{i \in \omega}$ $L_P$-indiscernible over $d$ and such that for every $\psi \in L_P$ \
\

$(+)$  $\psi((a'_{2i}a'_{2i+1}b'_{2i})_{i<n},d)$ $\implies$ $\psi((a_{2k_i}a_{2k_i+1}b_{2k_i})_{i<n},d)$ for some $k_0 < k_1 <... <k_{n-1} \in \omega$.
\end{cor}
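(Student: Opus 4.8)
The plan is to derive Corollary~\ref{triangles} from Proposition~\ref{pro: cuts on the plane}(1) by a standard extraction-plus-completion argument, where the role of the proposition is to handle the one genuinely problematic step: finding elements of $P$ to fill in the $b$-coordinates of the newly extracted indiscernible sequence while preserving the implication $(+)$ back to the original data.

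First I would set up the extraction. The sequence $(a_i b_{2i})_{i<\omega}$ (pairing each $a$ with the $P$-element of matching even index) lives in $\mathbb{M}$, so by the standard Ramsey/compactness argument for extracting indiscernibles one obtains an $L_P$-indiscernible sequence $(a'_i b'_i)_{i<\omega}$ over $d$ realizing the EM-type of $(a_i b_{2i})$; any finite $L_P$-formula satisfied by an increasing tuple of the new sequence is satisfied by some increasing tuple of the original one. The subtlety is that the $b$-coordinates are only meaningful on even indices (the odd $b$'s are junk), so I would treat the objects as \emph{triangles} $(a'_{2i}, a'_{2i+1}, b'_{2i})$, indexed by $i$, exactly as in the statement of $(+)$. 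The extracted sequence is $L_P$-indiscernible over $d$ by construction.

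The one step that does not come for free is that the extracted $b'$-coordinates are abstract realizations and need not literally be elements of $P(\mathbb{M})$ witnessing $\phi$ back in the original sequence; here is where Proposition~\ref{pro: cuts on the plane} enters. Given any formula $\psi((a'_{2i}a'_{2i+1}b'_{2i})_{i<n},d)$ holding of the extracted triangles, I would regard the $a'$-part (together with the parameter $d$) as fixed and the $b'$-part as the variable tuple ranging over $P$, and view $\psi$ in the form $\phi(x_1,\dots,x_n;y_1,\dots,y_n)$ of the proposition, with the $b$'s playing the role of the $y_i$ to be supplied by $P$. Applying part~(1) of the proposition—whose entire content is that one can complete such a configuration by finding a map $g:\omega\to P(\mathbb{M})$ and indices $l_i$ of the correct parity so that $\phi$ is witnessed by genuine $P$-elements along the \emph{original} sequence—produces the required indices $k_0<\cdots<k_{n-1}$ and $P$-elements making $\psi$ hold of the original triangles $(a_{2k_i}a_{2k_i+1}b_{2k_i})$. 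This is exactly the implication $(+)$.

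The main obstacle will be bookkeeping: matching the even/odd index conventions of Proposition~\ref{pro: cuts on the plane} (where $l_i\equiv i\ (\mathrm{mod}\ 2)$ and $E$ is the even numbers) to the triangle grouping used in $(+)$, and keeping the parameter $d$ absorbed throughout so that indiscernibility is genuinely \emph{over} $d$. In particular one must be careful that the proposition is invoked with the $a'$-coordinates and $d$ baked into the formula $\phi$, so that its conclusion yields witnesses purely inside $P$ along the given sequence $(b_{2i})$. I expect no essential difficulty beyond this indexing, since the density and stable-embeddedness facts doing the real work are already packaged inside Proposition~\ref{pro: cuts on the plane} via Theorem~\ref{thm: BB}.
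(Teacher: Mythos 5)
Your opening paragraph announces the right division of labour (Ramsey extraction, then using Proposition \ref{pro: cuts on the plane} to fill in missing $b$-coordinates), but the argument you actually give does not implement it, and it breaks at both steps. First, the extraction is run on the wrong objects. If you extract an indiscernible sequence from the pairs $(a_ib_{2i})_{i<\omega}$, then EM-transfer sends a formula about your triangles $(a'_{2i}a'_{2i+1}b'_{2i})_{i<n}$ to a tuple $(a_{j_{2i}},a_{j_{2i+1}},b_{2j_{2i}})_{i<n}$ for some increasing $j_0<\dots<j_{2n-1}$; this is \emph{not} an increasing tuple of original triangles $(a_{2k}a_{2k+1}b_{2k})$, since the two $a$'s need not be consecutive and the $b$ is attached to the wrong $a$ (one would need $j_{2i}=2k_i$ and $j_{2i}=k_i$ simultaneously). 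To get $(+)$ you must apply Ramsey to the sequence of \emph{triangles} $(a_{2k}a_{2k+1}b_{2k})_{k<\omega}$, treated as single objects; then $(+)$ holds for the extracted triangle sequence with no help from the proposition --- this is the paper's first step.

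Second, and more importantly, you misidentify what remains to be proved, and hence misapply the proposition. After the (corrected) extraction you have a sequence that is indiscernible \emph{as a sequence of triangles}, with elements of $P$ only at even positions; the corollary demands a sequence of \emph{pairs} $(a'_ib'_i)_{i<\omega}$, indiscernible as such, with $P$-points also at the odd positions (this extra strength is exactly what the main theorem uses, via $\neg\psi(d,a'_{2i+1},b'_{2i+1})$). Producing those odd-indexed $b'_i$ while keeping full indiscernibility is the step that ``does not come for free,'' and it is what Proposition \ref{pro: cuts on the plane} plus compactness supplies: the paper sets $I=(a''_i)_{i<\omega}$ and $f(a''_{2i})=b''_{2i}$, and uses the proposition to see that the type saying ``the pairs $(x_iy_i)_{i<\omega}$ are $L_P$-indiscernible over $d$ and the triangles $(x_{2i}x_{2i+1}y_{2i})$ realize the EM-type of the extracted triangles'' is consistent; $(+)$ for a realization then follows because that EM-type is complete and already transfers to the original data via the Ramsey step. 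Your third paragraph instead uses the proposition to re-derive $(+)$, with the $a'$-part ``fixed'' as parameters; this does not match its hypotheses (the formula must hold for \emph{all} increasing even-indexed tuples along the indiscernible sequence $I$, and the conclusion moves the indices along $I$ rather than keeping the $x$-entries fixed), and its conclusion is not $(+)$ either: in $(+)$ the original triangles come with their own $b_{2k_i}$, so no new $P$-elements are to be produced for them. As written, your proof never says where the odd $b'_i$ come from or why the resulting pairs sequence is indiscernible --- it is asserted ``by construction,'' which begs precisely the question the proposition exists to answer.
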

\begin{proof}
First by Ramsey find an $L_P$-indiscernible sequence $(a''_{2i}a''_{2i+1}b''_{2i})_{i<\omega}$ with property $(+)$. Now let $I=(a''_i)_{i<\omega}$ and $f(a''_{2i})=b''_{2i}$ and use Proposition \ref{pro: cuts on the plane} with compactness to conclude.
\end{proof}

Finally we are ready to prove our main result.

\begin{thm}
\label{thm:T_{P}-is-dependent}$T_{P}$ is dependent.
\end{thm}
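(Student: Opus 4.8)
The plan is to argue by contradiction, using the standard characterization of $IP$ by an indiscernible sequence with alternating truth values, and then to reach the contradiction by applying Corollary \ref{triangles} followed by Proposition \ref{pro: cuts on the plane}. So suppose $T_{P}$ has $IP$, witnessed by some $L_{P}$-formula $\theta(x;y)$. By Theorem \ref{thm: BB}(2) I may assume $\theta$ is bounded, and (replacing $\theta$ by $\neg\theta$ if necessary, which also has $IP$) that its outermost $P$-quantifier is existential; thus $\theta(x;y)=\exists z\in P\,\chi(x,z,y)$ for some $L_{P}$-formula $\chi$ with one fewer $P$-quantifier block. If $\theta$ has no $P$-quantifiers at all then $\theta\in L$ and $IP$ of $\theta$ contradicts $NIP$ of $T$ directly, so I may assume $z$ genuinely occurs. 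By the alternation characterization of $IP$ there is an $L_{P}$-indiscernible sequence $(a_{i})_{i<\omega}$ and a parameter $d$ with $\models\theta(a_{i},d)$ iff $i$ is even. Choosing witnesses for the even instances yields $b_{2i}\in P$ with $\models\chi(a_{2i},b_{2i},d)$, while for odd $i$ we have $\models\neg\exists z\in P\,\chi(a_{2i+1},z,d)$.

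The obstacle to applying Proposition \ref{pro: cuts on the plane} at this point is that its transfer step $(**)$ requires the sequence to be indiscernible \emph{over the parameter $d$}, whereas $(a_{i})$ need not be. This is exactly what Corollary \ref{triangles} supplies. I would form the triangles $(a_{2i},a_{2i+1},b_{2i})$, with two home-sort coordinates and one $P$-coordinate, and feed them into Corollary \ref{triangles} to obtain a sequence $(a'_{i}b'_{i})_{i<\omega}$ that is $L_{P}$-indiscernible \emph{over $d$} and enjoys property $(+)$.

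Next I would transfer the two defining conditions to the new sequence by contraposing $(+)$ on single triangles. The $L_{P}$-condition ``$\neg\chi(u,w,d)$'' on a triangle $(u,v,w)$ fails on every original triangle (each even witness works), so by $(+)$ it fails on every new triangle; hence $\models\chi(a'_{2i},b'_{2i},d)$ for all $i$. Likewise the condition ``$\exists z\in P\,\chi(v,z,d)$'' fails on every original triangle (odd points have no witness), so by $(+)$ it fails on every new triangle; hence $\models\neg\exists z\in P\,\chi(a'_{2i+1},z,d)$ for all $i$. Thus the new, now over-$d$-indiscernible, sequence still carries ``even points witnessed, odd points unwitnessed''.

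Finally I would apply Proposition \ref{pro: cuts on the plane} --- part (1), since by Theorem \ref{thm: BB}(4) the induced structure on $P$ is a pure linear order, or part (2) in the degenerate equality case --- to the $L_{P}$-indiscernible-over-$d$ sequence $(a'_{i})_{i<\omega}$, taking $f(2i)=b'_{2i}$ and $\Phi(x_{1},\dots,x_{n};z_{1},\dots,z_{n})=\bigwedge_{j\le n}\chi(x_{j},z_{j},d)$ for a suitable even $n$. The hypothesis holds because the even witnesses work, so the Proposition returns $g\supseteq f$ and indices $l_{1}<\dots<l_{n}$ with $l_{j}\equiv j\ (\mathrm{mod}\ 2)$ and $\models\bigwedge_{j}\chi(a'_{l_{j}},g(l_{j}),d)$. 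Since $l_{1}$ is odd and $g(l_{1})\in P$, this witnesses $\exists z\in P\,\chi(a'_{l_{1}},z,d)$, contradicting that odd points are unwitnessed, and so $T_{P}$ is dependent. The one genuinely delicate point, and the main thing to get right, is the reduction in the first paragraph: passing from an arbitrary bounded $IP$-formula to the single outer-existential normal form while keeping the inner $P$-quantifiers inside $\chi$, together with the parity bookkeeping that guarantees the Proposition's conclusion lands on an odd index.
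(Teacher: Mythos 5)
Your use of Corollary \ref{triangles} and of property $(+)$ in the first three paragraphs matches the paper, but the endgame has a genuine gap, and it is located exactly where you lean on the clause ``$L_P$-indiscernible \emph{over} $d$''. In your final step you apply Proposition \ref{pro: cuts on the plane} to $\Phi(\bar x;\bar z)=\bigwedge_{j}\chi(x_j,z_j,d)$, a formula with parameter $d$, and you correctly note that this requires $(a'_i)$ to be indiscernible over $d$. But that is impossible in your configuration, and you yourself have already proved it is impossible: your third paragraph shows that every even point satisfies the $L_P$-formula $\exists z\in P\,\chi(x,z,d)$ (witnessed by $b'_{2i}$) while every odd point satisfies its negation, so $a'_{2i}$ and $a'_{2i+1}$ have different $L_P$-types over $d$. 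Hence no sequence carrying the alternation you need can be indiscernible over $d$; the ``over $d$'' in the statement of Corollary \ref{triangles} is an overstatement which its proof cannot deliver (the Ramsey step gives triangles indiscernible over $d$, but the completed \emph{pair} sequence can only be taken indiscernible over $\emptyset$, with the $d$-information transported solely through $(+)$ --- and indeed the paper's proof of the theorem uses the output sequence only as $L_P$-indiscernible over $\emptyset$). Read literally, your paragraphs two and three are already jointly contradictory, which is a signal that a false statement is being consumed, not that the theorem has been proved. Once the corollary is corrected, your application of the Proposition is unjustified; unsurprisingly so, since its purported conclusion --- a $P$-witness for $\chi(\cdot,\cdot,d)$ at an odd point --- is precisely the thing that cannot exist.

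This is also why you cannot drop the paper's induction, as your proof does. The paper never asks the Proposition to realize $d$-formulas: the Proposition is applied (inside Corollary \ref{triangles}) only to parameter-free formulas, in order to extend $(b'_{2i})$ to the odd indices while keeping the pair sequence $(a'_ib'_i)$ indiscernible over $\emptyset$; the contradiction is then that $\psi(d,a'_{2i},b'_{2i})\wedge\neg\psi(d,a'_{2i+1},b'_{2i+1})$ gives infinite alternation of $\psi(d;y,z)$ along an $\emptyset$-indiscernible sequence, contradicting NIP of the \emph{inner} formula $\psi$, which is known by the induction on the number of bounded quantifiers. In your write-up the NIP of $\chi$ is never invoked, so once the illegitimate over-$d$ application of the Proposition is removed, no contradiction remains. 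The repair is essentially the paper's argument: induct on bounded quantifiers (or take an IP formula with minimally many of them), and get the contradiction from NIP of $\chi$ via the alternation criterion, not from manufacturing an odd witness.
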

\begin{proof}
First note that by Theorem \ref{thm: BB}, the $L_{P}$-induced
structure on $P$ is equality or it is ordered by some $L$-formula (with parameters).

We prove by induction on the number of bounded quantifiers that all
$L_{P}$-formulas are dependent, and since the set of formulas with
$NIP$ is closed under boolean combinations it is enough to consider
adding single existential bounded quantifier to a dependent formula. 

So assume $\phi(x;y)=(\exists z\in P)\psi(x,y,z)$ has $IP$ where $\psi(x,y,z)$
is an $L_{P}$-formula. Then there is some $L_{p}$-indiscernible
sequence $(a_{i})_{i<\omega}$ and $d$ such that $\phi(d,a_{i})$
holds if and only if $i$ is even, and so for $i=2k$ let $b_{i}\in P$
be such that $\psi(d,a_{i},b_{i})$ holds. By Lemma \ref{triangles} we find some
sequence $(a'_ib'_i)_{i<\omega}$ which is $L_{P}$-indiscernible and (using $(+)$) still $\psi(d,a'_{2i},b'_{2i})$  and $\neg \psi(d,a'_{2i+1},b'_{2i+1})$ hold. But this means that $\psi(d;y,z)$ has infinite alternation - contradicting the inductive assumption. 
\end{proof}

\begin{question}
Assuming $T$ is strongly-dependent, is $T_P$ strongly-dependent ?
\end{question}

\begin{rem}
Note however that unsurprisingly $dp$-minimality is not preserved in general after naming an indiscernible sequence. By \cite{Good}, Lemma 3.3, in an ordered $dp$-minimal group, there is no infinite definable nowhere-dense subset, but of course every small indiscernible sequence is like this.
\end{rem}

\begin{cor}
Every unstable dependent theory has a dependent expansion interpreting
an infinite linear order.
\end{cor}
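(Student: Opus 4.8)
The plan is to leverage the main result, Theorem \ref{thm:T_{P}-is-dependent}, which establishes that $T_P$ is dependent, together with the structural information packaged in Theorem \ref{thm: BB}. The corollary claims that every unstable dependent theory $T$ admits a dependent expansion interpreting an infinite linear order. The key observation is that the expansion $T_P$ we have been studying is \emph{itself} the desired witness, provided we can actually realize an infinite linear order inside it. Since $T$ is already assumed dependent and Theorem \ref{thm:T_{P}-is-dependent} tells us $T_P$ remains dependent, the only thing left to exhibit is the interpreted linear order.

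First I would recall that, by instability of $T$ together with $NIP$, the theory admits a formula with the order property, and a standard construction then produces a small indiscernible sequence $I$ that is \emph{nonconstant} — i.e., genuinely linearly ordered rather than totally indiscernible. Concretely, in an unstable $NIP$ theory one can find a formula $\theta(\bar x, \bar y)$ and an indiscernible sequence witnessing that $\theta$ orders the elements; passing to a dense complete order as index set (by adding realizations and taking a completion) gives exactly the kind of sequence $I$ required to form $T_P$. The nonconstancy of $I$ is what guarantees the interpreted order is infinite rather than collapsing to a point.

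Next I would invoke point 4) of Theorem \ref{thm: BB}: the $L_P$-induced structure on $P$ is that of a pure linear order. Because $I$ is nonconstant, this induced order is a genuine infinite linear order definable (hence interpretable) in the expanded structure $(M, I)$. Thus $(M, I) \models T_P$ interprets an infinite linear order via the predicate $P$ and the order it induces. Combined with Theorem \ref{thm:T_{P}-is-dependent}, this produces a dependent theory $T_P$ expanding $T$ that interprets an infinite linear order, which is precisely the assertion.

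The main obstacle I anticipate is ensuring that instability genuinely yields a \emph{nonconstant} (order-type) indiscernible sequence, so that the induced order on $P$ is infinite and not degenerate; one must check that the order property of $T$ can be transferred to produce such an $I$ indexed by a dense complete order, rather than merely a totally indiscernible (order-indiscernible but unordered) sequence, which would only give equality on $P$. This is exactly the dichotomy flagged at the start of the proof of Theorem \ref{thm:T_{P}-is-dependent} — induced structure on $P$ is either equality or a definable order — and instability is what forces us into the ordered case. A minor secondary point is confirming that "interprets" is satisfied by the induced definable order without needing any quotient, which follows since the order on $P$ is outright definable with parameters.
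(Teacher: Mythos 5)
Your proposal is correct and follows essentially the same route as the paper: instability yields an indiscernible sequence that is not an indiscernible set, naming it by $P$ gives a dependent theory by Theorem \ref{thm:T_{P}-is-dependent}, and point 4) of Theorem \ref{thm: BB} supplies the interpreted infinite linear order. Your flagged obstacle (ensuring the sequence is not totally indiscernible, so the induced structure on $P$ is a genuine order rather than equality) is exactly the content of the paper's phrase ``not an indiscernible set,'' and is handled by the standard fact that unstable theories admit such sequences.
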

\begin{proof}
Just take a small indiscernible sequence that is not an indiscernible
set, mark it by a predicate and use Theorem \ref{thm:T_{P}-is-dependent}.
\end{proof}

\section*{Acknowledgements}

We are greatful to Ehud Hrushovski for telling us to ``look at
cuts on the plane'', to Itay Kaplan for numerous attempts to discourage
our belief in the facts proved here and to the organizers of the Banff
meeting on ``Stable methods in unstable theories'' during which
this work was essentially accomplished.

\bibliographystyle{alpha}
\bibliography{BB3}

\end{document}